\newtheorem{theorem}{Theorem}
\newtheorem{lemma}[theorem]{Lemma}
\newtheorem{corollary}[theorem]{Corollary}
\theoremstyle{definition}
\newtheorem{remark}[theorem]{Remark}
\newskip\aline \newskip\halfaline
\def\skipaline{\vskip\aline}
\def\qedbox{$\rlap{$\sqcap$}\sqcup$}
\def\qed{\nobreak\hfill\penalty250 \hbox{}\nobreak\hfill\qedbox\skipaline}
\newcommand\bR{{\mathbb R}}
\newcommand{\be}{\boldsymbol{e}}
\newcommand{\ve}{{\varepsilon}}
\newcommand{\eB}{\EuScript{B}}
\newcommand{\eM}{\EuScript{M}}
\newcommand{\eT}{\EuScript{T}}
\newcommand{\ra}{\rightarrow}
\newcommand{\Llra}{{\Longleftrightarrow}}
\def\inpr{\mathbin{\hbox to 6pt{\vrule height0.4pt width5pt depth0pt \kern-.4pt \vrule height6pt width0.4pt depth0pt\hss}}}
\begin{document}

\title{Bead sliding and convex inequalities}
%\date{Started October 24, 2007. Completed November 2, 2007. 2nd Revision November 13, 2007.}

\author{Liviu I. Nicolaescu}
\thanks{Last modified on {\today}.}

\address{Department of Mathematics, University of Notre Dame, Notre Dame, IN 46556-4618.}
\email{nicolaescu.1@nd.edu}
\urladdr{\url{http://www.nd.edu/~lnicolae/}}

\begin{abstract}  We analyze  a  simple  game of beads on a rod and relate it to some classical convex inequalities.. \end{abstract}

\maketitle

We consider distributions (or configurations) of $n$ beads  on the  real semiaxis  $[\mu,\infty)$.   Any bead in such a distribution is  capable of sliding to the right (in the positive  direction) but not allowed to slide to the left. We indicate such a  distribution of beads by a vector
\[
\vec{A}=(A_1,\dotsc, A_n),\;\; \mu \leq A_1<A_2<\cdots <A_n,
\]
where the coordinates   $A_i$ indicate the positions of the beads.    The $i$-th bead is the bead located at $A_i$. A distribution is called \emph{monotone} if
\[
A_1-\mu \leq A_2-A_1\leq \cdots \leq A_n-A_{n-1}.
\]
We denote by $\eB_n=\eB_n(\mu)$ the collection of    monotone  distributions of  $n$ beads on the semiaxis $[\mu,\infty)$.      Clearly, we can view  $\eB_n(\mu)$ as a closed convex set in $\bR^n$.   

We will indicate  the elements  of $\eB_n(\mu)$ using capital letters $\vec{A}$, $\vec{B}$ etc. To a configuration $\vec{A}\in \eB_n(\mu)$ we associate the vector of differences $\vec{a}=\Delta\vec{A}$,
\[
\vec{a}=(a_1,\dotsc, a_n), \;\;a_1=A_1-\mu,\dotsc, a_k=A_k-A_{k-1},\forall k=2,\dotsc, n.
\]
We have  a natural  partial order on $\eB_n(\mu)$
\[
\vec{A} \leq \vec{B} \Llra A_k\leq B_k,\;\;\forall k=1,\dotsc, n.
\]
 Let $\be_1,\dotsc, \be_n$ denote the canonical basis of $\bR^n$. Given a bead distribution  $\vec{A}\in \eB_n(\mu)$ we define an \emph{admissible  bead  slide} to be a transformation
 \[
 \vec{A}\mapsto  \vec{A'}=\vec{A}+\delta\be_k,
 \]
 where $\delta\geq 0$, $1\leq k\leq n$ and  the distribution $\vec{A'}$ is  monotone.  Intuitively, this means that  we slide to the right by a distance $\delta$   the  $k$-th bead  of the  distribution  $\vec{A}$. The  admissibility of the move   means that the resulting distribution of  beads continues to be monotone.

 We define a new partial relation  $\preceq$ on $\eB_n(\mu)$  by declaring $\vec{A}\preceq \vec{B}$ if  the distribution $\vec{B}$ can be obtained  from $\vec{A}$   via a finite sequence of  admissible bead slides.   When  $\vec{A}\preceq \vec{B}$ we say that \emph{we can slide the distribution} $\vec{A}$ to the distribution $\vec{B}$

 If we think   of $\eB_n(\mu)$ as a closed convex set in $\bR^n$ and $\vec{A},\vec{B}\in\eB_n(\mu)$, then $\vec{A}\preceq \vec{B}$ if and only if   we can travel  from $\vec{A}$ to $\vec{B}$ \emph{inside}  $\eB_n(\mu)$ along   a positive zig-zag,  i.e.,  a continuous path   consisting of finitely many   segments     parallel to  the   coordinate  axes and oriented in the positive directions of the axes. 
 
 The goal of this note  is to investigate when  can we slide one mononote  distribution of beads to another monotone distribution.  Clearly if we can slide $\vec{A}$ to $\vec{B}$ then $\vec{A}\leq \vec{B}$.
 
\begin{remark} The converse implication  is true if $n=1,2$, but false if $n\geq 3$. Indeed if $n\geq 3$, and $\vec{B}\in\eB_n(\mu)$ is an equidistant distribution, i.e.,
 \[
 B_1-\mu=B_2-B_1=\cdots=B_n-B_{n-1}
 \]
 then there is no distribution $\vec{A}\prec\vec{B}$.  To see this observe that there is no distribution $\vec{A}$ such that $\vec{B}$ is obtained from $\vec{A}$ by a single admissible bead slide.   \qed
 \label{rem: equi}
 \end{remark}

 Define $\lambda_n: \eB_{n}(\mu) \ra [0,\infty)$ by setting
 \[
 \lambda_n(\vec{A}):=   a_{n}-a_1=  (a_{k}- a_{n-1})+\cdots (a_2-a_1) +a_1,
 \]
 where we recall that
 \[
a_1=A_1-\mu,\;\; a_k= A_k-A_{k-1},\;\;k>1.
 \]
 Clearly $\lambda_n(\vec{A})=0$ if and only      the  beads  described by the distribution $\vec{A}$ are equidistant, i.e.,
 \[
 A_n-A_{n-1}=\cdots =A_2-A_1=A_1-\mu.
 \]
 The following is the main result of this note.

\begin{theorem} Let $\mu\in \bR$ and $\vec{B}\in\eB_n(\mu)$. Then
 \begin{equation}
  b_k>b_{k-2},\;\;\forall k\geq 3\Longleftrightarrow \vec{A}\preceq \vec{B},\;\;\forall \vec{A}<\vec{B}\in\eB_n(\mu),
\label{eq: order}
 \end{equation}
 where
 \[
 b_1=B_1-\mu,\;\;b_k=B_k-B_{k-1},\;\;\forall k\geq 2.
 \]
 \label{th: order}
 \end{theorem}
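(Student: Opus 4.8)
The plan is to work entirely in the position coordinates and read off the polyhedral geometry of $\eB_n(\mu)$. Setting $A_0:=\mu$, a configuration lies in $\eB_n(\mu)$ iff the sequence $A_0,\dots,A_n$ is convex, i.e. $g_k(\vec A):=A_{k-1}-2A_k+A_{k+1}\ge 0$ for $1\le k\le n-1$ (together with $A_1\ge\mu$). An admissible slide is then exactly the increase of a single coordinate $A_k$ while staying in $\eB_n(\mu)$: for $k<n$ this is blocked precisely when $g_k=0$, i.e. $a_k=a_{k+1}$, while $A_n$ may always be increased. Call facet $k$ \emph{tight at} $\vec B$ when $b_k=b_{k+1}$. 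Since $\vec b$ is nondecreasing, $b_k=b_{k-2}$ forces $b_{k-2}=b_{k-1}=b_k$, so the condition $b_k>b_{k-2}$ for all $k\ge3$ is equivalent to: \emph{no two facets with consecutive indices are tight at $\vec B$}; I would prove the theorem in this form. For bookkeeping I use the deficiency vector $s_k:=B_k-A_k\ge0$ (with $s_0=0$): convexity of $\vec A$ reads $\sigma_k:=s_{k-1}-2s_k+s_{k+1}\le G_k$, where $G_k:=b_{k+1}-b_k\ge0$ vanishes exactly at tight facets, a slide decreases one $s_k$, and decreasing $s_k$ (for $k<n$) is blocked exactly when $\sigma_k=G_k$. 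Reaching $\vec B$ means driving $\vec s$ to $0$.

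For the implication ``$\Leftarrow$'' I would argue the contrapositive. Let $i,i+1,\dots,j$ be a maximal run of tight facets with $m:=j-i\ge1$, put $A_{i+t}=B_{i+t}-\eta$ for $0\le t\le m$, and leave all other beads at $\vec B$; for small $\eta>0$ this $\vec A\in\eB_n(\mu)$ satisfies $\vec A<\vec B$ (maximality of the run gives $b_{i-1}<b_i$ and $b_{j+1}<b_{j+2}$, providing the slack). Because every bead outside the run already sits at its target and slides only increase coordinates while never exceeding $\vec B$, no outside bead can ever move, so the whole process is confined to the block $(s_0,\dots,s_m)$ governed by the pure concavity relations $\sigma\le 0$ with vanishing endpoints $s_{-1}=s_{m+1}=0$. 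The key point is that the all-zero block is unreachable by finitely many slides: just before a hypothetical last move all block deficiencies vanish except one $s_{t^\ast}>0$, but such a configuration violates the concavity relation at a neighbor of $t^\ast$ (which lies in range since $m\ge1$), so it is not in $\eB_n(\mu)$ -- a contradiction. Hence $\vec A\not\preceq\vec B$.

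For ``$\Rightarrow$'' the entire content is \emph{finiteness}: the straight segment from $\vec A$ to $\vec B$ is already a monotone path inside the convex set $\eB_n(\mu)$, so connectivity is never the issue, and the concave-cone computation above shows that naive descent can converge to $\vec B$ only asymptotically. Assuming no two consecutive tight facets, I would reach $\vec B$ in two phases. Phase~1 shrinks $\vec s$ uniformly by a right-to-left sweep that sets each $s_k$ to its minimal admissible value: using $G_k\ge0$ and $s'_n=0$ one gets $s'_k\le\tfrac12(s_{k-1}+s'_{k+1})$, hence $\max_k s'_k\le(1-2^{1-n})\max_k s_k$, so finitely many sweeps bring all $s_k$ below any preassigned $\varepsilon$. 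Phase~2 takes $\varepsilon:=\tfrac12\min\{G_j:G_j>0\}$ and zeroes coordinates in the order: first $s_n$ (free), then every $k$ whose facet is \emph{not} tight (admissible since both neighbors are $\le\varepsilon$, so $s_{k-1}+s_{k+1}\le2\varepsilon\le G_k$), and finally every $k$ whose facet \emph{is} tight. The hypothesis enters in this last step: by isolation the neighbors $k\pm1$ of a tight index are non-tight, hence were already zeroed, so $s_{k-1}+s_{k+1}=0=G_k$ and the slide to $s_k=0$ is admissible. Each coordinate is cleared by a single slide, so $\vec B$ is reached in finitely many moves.

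The hard part is exactly this finiteness in ``$\Rightarrow$'': one must defeat the asymptotic spiral, and the device that does so is the clean separation of uniform shrinking (Phase~1, valid regardless of tightness) from exact zeroing (Phase~2, which needs the slack $G_k>0$ at non-tight facets and the isolation of tight facets to clear their neighbors first). I would finally record the minor points: the interpretation $\vec A<\vec B$ as $\vec A\le\vec B$, $\vec A\ne\vec B$; and the edge cases $n\le2$, where the equivalence is vacuous and the Phase-2 scheme (with at most one tight facet, whose neighbors are the wall $A_0=\mu$ and the free bead $A_n$) reaches $\vec B$ directly.
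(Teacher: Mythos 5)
Your argument is correct, and while its engine coincides with the paper's, the architecture around it is genuinely different. Your Phase-1 right-to-left sweep that slides each bead as far as the concavity constraint allows (clipped at $\vec{B}$) is essentially the paper's map $\eT=\eM_1\circ\cdots\circ\eM_n$ in disguise: sliding bead $k$ maximally equalizes the gaps $a_k=a_{k+1}$, i.e.\ moves it to the midpoint of its neighbors, and both arguments extract a geometric contraction from this sweep (the paper gets $\lambda(\eT\vec{X})\leq(1-2^{-n})\lambda(\vec{X})$ for $\lambda=a_{n+1}-a_1$; you get $\max_k s_k'\leq(1-2^{1-n})\max_k s_k$ for the deficiencies $s_k=B_k-A_k$). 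The difference is what the contraction is used for. The paper runs it inside a proof by contradiction: assuming no reachable configuration ever matches $\vec{B}$ in a coordinate, the iterates force $\lambda\to 0$, contradicting the separate comparison lemma $\lambda(\vec{X})\geq\frac{1}{n}\lambda(\vec{B})$; a ``crossing'' configuration then splits the problem in two, and strong induction on $n$ closes the argument. You stay constructive: clipping at $\vec{B}$ removes the overshoot/crossing dichotomy, finitely many sweeps drive all deficiencies below $\varepsilon=\frac12\min\{G_j:G_j>0\}$, and your Phase-2 endgame (free bead, then non-tight beads, admissible since $s_{k-1}+s_{k+1}\leq 2\varepsilon\leq G_k$, then isolated tight beads whose neighbors already sit exactly at target) reaches $\vec{B}$ with no induction, no limiting argument, an explicit move count, and a precise localization of where the hypothesis enters. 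For the converse the paper again inducts on $n$ and plants an equidistant four-bead pattern at the top end, invoking Remark~\ref{rem: equi}; you instead generalize that remark directly, shifting an arbitrary maximal run of $m+1\geq 2$ consecutive tight facets left by small $\eta$ (maximality supplies the slack $G_{i-1},G_{j+1}>0$, with the wall $s_0=0$ and the free bead covering $i=1$ and $j=n-1$), observing that along any finite sequence ending at $\vec{B}$ no coordinate may ever exceed its target so the outside beads are frozen, and killing the last move because the penultimate configuration, with a single positive deficiency $s_{t^\ast}$, violates the tight constraint at a neighboring facet inside the run (which exists since $m\geq 1$). All the boundary conventions you flag check out, including the $n\leq 2$ cases; what your route buys is a fully algorithmic, induction-free proof with quantitative control, while the paper's buys a shorter endgame at the cost of a contradiction hypothesis and a two-lemma inductive scaffold.
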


\begin{remark} The condition $b_k> b_{k-2}$, $\forall k\geq 3$  signifies  that no string of  four  consecutive beads of the the distribution $\vec{B}$  is equidistant.\qed
\end{remark}

 \begin{proof}  We first   prove the  implication $\Rightarrow$,
 \begin{equation*}
  b_k>b_{k-2},\;\; \forall k\geq 3\Longrightarrow \vec{A}\preceq \vec{B},\;\;\forall \vec{A}\leq \vec{B}.
 \tag{$S_n$}
 \label{tag: sn}
 \end{equation*}
 We argue  by induction on $n$. The cases $n=1$  and $n=2$ are trivial.
 
 To  complete the inductive step note  first that the assumption $b_k>b_{k-2}$ forall $k\geq 2$ implies $\lambda(\vec{B})>0$.  We have the following key estimate.

 \begin{lemma}   If $\vec{A},\vec{B}\in \eB_{n+1}(\mu)$ are such that $\vec{A}\leq \vec{B}$ and $A_{n+1}=B_{n+1}$ then
 \begin{equation}
 \lambda_{n+1}(\vec{A})\geq \frac{1}{n}\lambda_{n+1}(\vec{B}).
 \label{eq: ineq}
 \end{equation}
 \end{lemma}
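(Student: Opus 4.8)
The plan is to bypass all the intermediate coordinates and compare directly the two extreme gaps of $\vec{A}$ and $\vec{B}$, since $\lambda_{n+1}$ only sees $a_1,a_{n+1}$ and $b_1,b_{n+1}$. The hypotheses supply one comparison for each extreme gap, and although these two comparisons point in opposite-looking directions, they cooperate to give the result.

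First I would handle the smallest gap. From $\vec{A}\leq\vec{B}$ we have $A_1\leq B_1$, hence $a_1=A_1-\mu\leq B_1-\mu=b_1$. Next comes the largest gap, where the endpoint hypothesis does the real work: using $A_{n+1}=B_{n+1}$ together with $A_n\leq B_n$,
\[
a_{n+1}=A_{n+1}-A_n=B_{n+1}-A_n\geq B_{n+1}-B_n=b_{n+1}.
\]
Subtracting the two comparisons gives $\lambda_{n+1}(\vec{A})=a_{n+1}-a_1\geq b_{n+1}-b_1=\lambda_{n+1}(\vec{B})$. Since $\vec{B}$ is monotone we have $\lambda_{n+1}(\vec{B})\geq 0$, and because $n\geq 1$ the asserted bound $\lambda_{n+1}(\vec{A})\geq\frac{1}{n}\lambda_{n+1}(\vec{B})$ follows at once.

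The only step that requires a moment's thought is the treatment of $a_{n+1}$: the raw comparison $A_n\leq B_n$ points the ``wrong way,'' and it is precisely the equality of right endpoints $A_{n+1}=B_{n+1}$ that flips it into the needed inequality $a_{n+1}\geq b_{n+1}$. Everything else is one-line bookkeeping, so I do not anticipate a genuine obstacle. I would note in passing that this argument in fact yields the sharper estimate with constant $1$ in place of $\frac{1}{n}$ (with equality exactly when $A_1=B_1$ and $A_n=B_n$); the weaker factor $\frac{1}{n}$ recorded in the statement is evidently all that the subsequent induction needs. Were the intended application to supply only monotonicity, the equal total length $A_{n+1}-\mu=B_{n+1}-\mu$, and a one-sided endpoint bound, my fallback would be to bound $\lambda_{n+1}(\vec{B})$ from above by the total length and $\lambda_{n+1}(\vec{A})$ from below through the average gap $(A_{n+1}-\mu)/(n+1)$, which is the route that naturally produces a factor of order $\frac{1}{n}$.
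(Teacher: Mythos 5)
Your proof is correct, and it takes a genuinely different --- and in fact sharper --- route than the paper's. The paper passes to the second differences $\alpha_k=a_k-a_{k-1}$, $\beta_k=b_k-b_{k-1}$, writes the equal total lengths $A_{n+1}-\mu=B_{n+1}-\mu$ as equal weighted sums of these, and then trades the weighted sums for $\lambda_{n+1}$ by bounding the weights between $1$ and $n$; the factor $\frac{1}{n}$ is exactly the price of that weight comparison. You instead observe that $\lambda_{n+1}$ depends only on the two extreme gaps, and that the hypotheses control each of them separately: $A_1\leq B_1$ gives $a_1\leq b_1$, while $A_n\leq B_n$ combined with $A_{n+1}=B_{n+1}$ flips into $a_{n+1}=B_{n+1}-A_n\geq B_{n+1}-B_n=b_{n+1}$, whence $\lambda_{n+1}(\vec{A})=a_{n+1}-a_1\geq b_{n+1}-b_1=\lambda_{n+1}(\vec{B})$ --- constant $1$ rather than $\frac{1}{n}$, and your equality analysis ($A_1=B_1$ and $A_n=B_n$) is also right. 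Since $\lambda_{n+1}(\vec{B})\geq 0$ by monotonicity of $\vec{B}$, the stated bound follows a fortiori, and the sharper form serves the paper's sole use of the lemma (contradicting $\lambda_{n+1}(\vec{A}_\nu)\to 0$ when $\lambda_{n+1}(\vec{B})>0$) just as well. A side benefit of your route: it never uses monotonicity of $\vec{A}$ or the intermediate coordinates $A_2,\dotsc,A_{n-1}$ at all, and it sidesteps some typographical wrinkles in the paper's displayed computation (the weights there should be $n-k+2$ rather than $n-k+1$, which vanishes at $k=n+1$, and one summation limit reads $n+2$). Here the paper's weighted-sum bookkeeping buys nothing extra; your two-line comparison strictly dominates it.
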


 \begin{proof} For $k=2,\dotsc, n+1$ we set
 \[
 \alpha_k:= a_k-a_{k-1},\;\;\beta_k:=b_k-b_{k-1}.
 \]
 Note that  $\alpha_k,\beta_k\geq 0$,
 \[
 \lambda_{n+1}(\vec{A)}=\sum_{k=2}^{n+1}\alpha_k,\;\;\lambda_{n+1}(\vec{B})=\sum_{k=2}^{n+1} \beta_k,
 \]
 \[
 a_k=  a_1+\sum_{i=2}^k \alpha_i,\;\; b_k =a_1+\sum_{i=2}^k\beta_i,
 \]
 and
 \[
 (n+1)a_1+\sum_{k=2}^{n+1}(n-k+1) \alpha_k=A_{n+1}-\mu=B_{n+1}-\mu=(n+1)b_1+\sum_{k=2}^{n+1}(n-k+1) \beta_k.
 \]
 Hence
 \[
 \sum_{k=2}^{n+1}(n-k+1) \alpha_k=(n+1)(b_1-a_1)+\sum_{k=2}^{n+1}(n-k+1) \beta_k\geq \sum_{k=2}^{n+1}(n-k+1) \beta_k.
 \]
We deduce
\[
n\lambda_{n+1}(\vec{A})=n\sum_{k=2}^{n+1}\alpha_k\geq \sum_{k=2}^{n+1}(n-k+1) \alpha_k\geq \sum_{k=2}^{n+1}(n-k+1) \beta_k\geq \sum_{k=2}^{n+2}\beta_k=\lambda_{n+1}(\vec{B}).
\]
 \end{proof}

 Consider two  distributions $\vec{A},\vec{B}\in \eB_{n+1}(\mu)$.        Then  we can slide the last bead of $\vec{A}$ until it reaches the  position of the last bead of $\vec{B}$.
 \[
 \vec{A}\mapsto \vec{A'}:=\vec{A}+ \bigl(\, B_{n+1}-A_{n+1}\,\bigr)\be_{n+1}
 \]
 Clearly this    slide is admissible.      This shows that it suffices to prove ($S_{n+1}$) only in the special case  $A_{n+1}=B_{n+1}$.        To prove the implication ($S_{n+1}$) we will rely on the following simple observation.

 \begin{lemma} Assume that  the implication  ($S_k$) holds for every $k\leq n$.  If  $\vec{A},\vec{B}\in \eB_{n+1}(\mu)$ are two distributions    such that  $\vec{A}\leq \vec{B}$,  and $A_k=B_k$  for some $k\leq n$ then $\vec{A}\preceq \vec{B}$.
\label{lemma: ind}
 \end{lemma}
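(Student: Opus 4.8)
The plan is to use the bead pinned at the common position $A_k=B_k$ to decouple the configuration into two independent blocks and to slide each block separately using the inductive hypotheses $(S_k)$ and $(S_{n+1-k})$. The key preliminary observation is a rigidity fact: in any sequence of admissible slides, every bead moves only to the right, so the $k$-th bead, which starts and ends at the same position $A_k=B_k$, cannot move at all and stays frozen at $A_k$ throughout. This lets me treat the left block (beads $1,\dotsc,k$) and the right block (beads $k+1,\dotsc,n+1$) separately; the sole interaction between them is the monotonicity constraint at the junction, namely $a_{k+1}\geq a_k$.

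First I would handle the left block. The truncations $(A_1,\dotsc,A_k)$ and $(B_1,\dotsc,B_k)$ are monotone distributions in $\eB_k(\mu)$ with common last bead $A_k=B_k$ and $(A_1,\dotsc,A_k)\leq (B_1,\dotsc,B_k)$, and the target inherits $b_j>b_{j-2}$ for $3\leq j\leq k$ from the standing hypothesis on $\vec{B}$, so $(S_k)$ yields a slide of the left block onto its $\vec{B}$-position. I would then lift this slide to $\eB_{n+1}(\mu)$ with the right block frozen: each elementary step slides some bead of index $\leq k-1$ to the right, so $a_k=A_k-A_{k-1}$ can only decrease while $a_{k+1}$ is untouched; since $a_k\leq a_{k+1}$ held in $\vec{A}$, the junction constraint is preserved and the whole configuration stays monotone. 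This gives $\vec{A}\preceq (B_1,\dotsc,B_k,A_{k+1},\dotsc,A_{n+1})$.

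Symmetrically, I would regard $(A_{k+1},\dotsc,A_{n+1})$ and $(B_{k+1},\dotsc,B_{n+1})$ as monotone distributions in $\eB_{n+1-k}(B_k)$ with base point $B_k=A_k$; their successive differences are $b_{k+1},\dotsc,b_{n+1}$, which satisfy the required strict inequalities, so $(S_{n+1-k})$ provides a slide of the right block onto its $\vec{B}$-position. Lifting this slide with the left block now frozen at $\vec{B}$, every step moves a bead of index $\geq k+1$ to the right, so $a_{k+1}$ only increases while $a_k$ is fixed; one checks that at the start of this stage $a_{k+1}\geq a_k$ (it reduces to $A_{k+1}-A_k\geq B_k-B_{k-1}$, which follows from the monotonicity of $\vec{A}$ together with $\vec{A}\leq\vec{B}$), so the junction constraint stays valid throughout. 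Concatenating the two stages yields $\vec{A}\preceq\vec{B}$.

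The main obstacle, and really the only step requiring care, is verifying that the two block-slides, each admissible only within its own smaller configuration space, remain admissible once lifted into $\eB_{n+1}(\mu)$; this is precisely the junction bookkeeping above, and it succeeds because sliding within one block drives $a_k$ and $a_{k+1}$ monotonically in the favorable directions. I would also flag explicitly that the hypothesis that $\vec{B}$ has no four consecutive equidistant beads is genuinely needed here: it is exactly what makes $(S_k)$ and $(S_{n+1-k})$ applicable to the two truncated targets, and for an equidistant target the conclusion already fails by Remark \ref{rem: equi}.
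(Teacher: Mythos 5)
Your proof is correct and takes essentially the same route as the paper: split the configuration at the pinned bead $A_k=B_k$ and apply $(S_k)$ to the first $k$ beads and $(S_{n+1-k})$ to the remaining $n+1-k$ beads. Your extra work — the rigidity observation that bead $k$ never moves, and the verification that the junction constraint at $a_k,a_{k+1}$ survives the lifting of each block slide to $\eB_{n+1}(\mu)$ — accurately fills in details that the paper's two-sentence proof leaves implicit.
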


 \begin{proof}      Note that
 \[
(A_1,\dotsc, A_k)\leq (B_1,\dotsc, B_k)\;\;\mbox{and}\;\; (A_{k+1},\dotsc, A_{n+1})\leq (B_{k+1},\dotsc, B_{n+1}).
\]
According to $S_k$,  we can slide  the first $k$-beads of the distribution $\vec{A}$ to the first $k$ beads of the distribution  $\vec{B}$.  Using $S_{n-k+1}$ we can then  slide the last $(n-k+1)$ beads of the distribution $\vec{A}$ to  the last $(n-k+1)$ beads of the distribution $\vec{B}$.

 \end{proof}

 Using the   above   observations we  deduce  that  the implication  $S_{n+1}$ is a consequence of the   following result.

 \begin{lemma} Assume that  the implication  ($S_k$) holds for every $k\leq n$.  If  $\vec{A},\vec{B}\in \eB_{n+1}(\mu)$ are two distributions    such that $\vec{A}\leq \vec{B}$ and $A_{n+1}=B_{n+1}$ then we can slide $\vec{A}$ to a   configuration $\vec{C}\in \eB_{n+1}(\mu)$  that \emph{crosses}  $\vec{B}$, i.e.,

 \begin{enumerate}

 \item[(a)] $\vec{C}\leq \vec{B}$,

 \item[(b)] $C_{n+1}=B_{n+1}$,

 \item[(c)] $C_k=B_k$  for some $k\leq n$.

 \end{enumerate}

 \label{lemma: cross}
 \end{lemma}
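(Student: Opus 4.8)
The plan is to slide $\vec A$ greedily and let the estimate \eqref{eq: ineq} force a collision with $\vec B$. Since only beads $1,\dots,n$ will move while bead $n+1$ stays at $B_{n+1}$, it is convenient to record a configuration $\vec C\in\eB_{n+1}(\mu)$ by its partial sums $S_k:=C_k-\mu=a_1+\cdots+a_k$. Monotonicity of $\vec C$ is exactly convexity of the sequence $S_0=0,S_1,\dots,S_{n+1}$; the condition $\vec C\le\vec B$ reads $S_k\le S_k(\vec B)$ for all $k$; and an admissible slide of the $k$-th bead ($k\le n$) raises the single value $S_k$, leaving the others fixed, as far as $\min\bigl(\tfrac12(S_{k-1}+S_{k+1}),\,S_k(\vec B)\bigr)$, i.e. up to either the convexity ceiling (where $a_k=a_{k+1}$) or the barrier $S_k(\vec B)$. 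In these terms goal (c) is to reach $S_k=S_k(\vec B)$ for some $k\le n$.

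First I would set up the greedy process: starting from $\vec A$, repeatedly pick any bead $k\le n$ with $a_k<a_{k+1}$ and $C_k<B_k$ and slide it to the ceiling $\min\bigl(\tfrac12(S_{k-1}+S_{k+1}),S_k(\vec B)\bigr)$. Every such move keeps the configuration in $\eB_{n+1}(\mu)$, keeps it $\le\vec B$ (the ceiling never exceeds the barrier), and fixes bead $n+1$, so any configuration $\vec C$ it produces automatically satisfies (a) and (b). If some move actually reaches the barrier, i.e. $C_k=B_k$ with $k\le n$, I stop: this $\vec C$ satisfies (c) as well and is the desired crossing configuration.

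The heart of the matter is to show that a barrier is hit after finitely many slides, and this is where \eqref{eq: ineq} enters. Every intermediate configuration $\vec C$ obeys $\vec C\le\vec B$ and $C_{n+1}=B_{n+1}$, so \eqref{eq: ineq} gives $\lambda_{n+1}(\vec C)\ge\frac1n\lambda_{n+1}(\vec B)$, and under the standing hypothesis $b_k>b_{k-2}$ one has $\lambda_{n+1}(\vec B)>0$; thus every configuration along the way stays bounded away from the equidistant one. Suppose the process never reached a barrier. Then each slide is a convexity raise (producing $a_k=a_{k+1}$), so the monovariant $\Phi(\vec C)=\sum_{k=1}^n C_k=n\mu+\sum_{k=1}^n S_k$ strictly increases and is bounded above by $\sum_{k=1}^n B_k$; hence the successive slide amounts tend to $0$. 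Running the process with a fair schedule (attempting each bead infinitely often) then forces $a_{k+1}-a_k\to 0$ for every $k\le n$, i.e. the configurations converge to the equidistant distribution and $\lambda_{n+1}\to0$, contradicting the uniform positive lower bound. Therefore a barrier is reached after finitely many slides, producing a crossing configuration $\vec C$; combined with Lemma \ref{lemma: ind} this yields ($S_{n+1}$).

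I expect the finiteness/termination step to be the only real obstacle. The ``must touch'' conclusion is immediate once one knows the process halts: a halted non-barrier configuration would have $a_k=a_{k+1}$ for all $k\le n$, i.e. be equidistant, which \eqref{eq: ineq} excludes. But ruling out an infinite run genuinely needs the quantitative gap $\lambda_{n+1}\ge\frac1n\lambda_{n+1}(\vec B)>0$ together with the convergence of $\Phi$. An alternative packaging that hides the limiting argument is to let $\vec C$ maximize $\Phi$ over all configurations reachable from $\vec A$ with bead $n+1$ fixed and staying $\le\vec B$: at such a maximizer no further admissible slide is possible, so either some $C_k=B_k$ (done) or all $a_k=a_{k+1}$ (equidistant, excluded by \eqref{eq: ineq}); in that formulation the obstacle migrates to showing the supremum of $\Phi$ is attained by a genuinely reachable configuration.
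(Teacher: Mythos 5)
Your proof is correct, and it runs on the same engine as the paper's: both arguments fix the last bead, repeatedly slide interior beads toward midpoints of their neighbors' gaps, and derive a contradiction by driving $\lambda_{n+1}$ to $0$ while the key estimate $\lambda_{n+1}(\vec{X})\geq \frac{1}{n}\lambda_{n+1}(\vec{B})>0$ (valid for every intermediate configuration, since all stay $\leq\vec{B}$ with last bead pinned) forbids this. Your handling of the barrier is also the paper's in disguise: the paper assumes no crossing is reachable and observes that then every midpoint slide is automatically a $\vec{B}$-move (if the midpoint overshot $B_k$, stopping exactly at $B_k$ would be an admissible crossing slide), whereas you cap each slide at $\min\bigl(\frac{1}{2}(S_{k-1}+S_{k+1}),\,S_k(\vec{B})\bigr)$ and halt on contact. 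Where you genuinely diverge is the termination mechanism. The paper composes one full sweep $\eT=\eM_1\circ\cdots\circ\eM_n$ and computes, via the telescoping formula for the first component, the explicit contraction $\lambda_{n+1}(\eT(\vec{X}))\leq(1-2^{-n})\lambda_{n+1}(\vec{X})$, so $\lambda_{n+1}(\eT^{\nu}(\vec{A}))\to 0$ with a quantitative rate and no scheduling or compactness considerations. You instead use a soft argument: the monovariant $\Phi=\sum_{k\leq n}C_k$ is increasing and bounded, so slide amounts are summable, each $C_k$ converges, and fairness of the schedule forces all limit gaps $a_{k+1}^{\infty}-a_k^{\infty}$ to vanish, i.e.\ the limit is equidistant --- a valid chain (your fair-schedule step does work: at attempt times of bead $k$ the gap equals twice the slide amount or is already zero, and the gaps converge, so the limit gap is zero). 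Your version avoids the paper's sweep computation entirely, at the cost of an infinitary limit argument where the paper's proof is effective; your closing ``maximize $\Phi$ over reachable configurations'' repackaging is correctly flagged as problematic, since the reachable set need not be closed and the supremum's attainment is exactly the issue the paper's contraction estimate circumvents.
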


 \begin{proof}   Define
 \[
 \eB_{n+1}(\vec{B}):=\bigl\{\, \vec{T}\in \eB_{n+1}(\mu);\;\;\vec{T}\leq \vec{B},\;\;T_{n+1}=B_{n+1}\,\bigr\}.
 \]
Note that $\vec{A}\in \eB_{n+1}(\vec{B})$.  We  define a $\vec{B}$-move, to be a bead slide on a configuration $\vec{T}\in \eB_{n+1}(\vec{B})$ that produces another configuration in $\eB_{n+1}(\vec{B})$.   We need to prove  that by   a  sequence  of $\vec{B}$-moves starting with $\vec{A}$ we can produce  a    configuration  $\vec{C}\in \eB_{n+1}(\vec{B})$ that crosses $\vec{B}$.

We argue  by contradiction. Hence we   will work under the following assumption.
\begin{equation*}
\mbox{ \emph{We cannot produce crossing configurations  via any sequence of  $\vec{B}$-moves starting with $\vec{A}$.}}
\tag{$\dag$}
\label{tag: a}
\end{equation*}
 We  show that this implies that there exists  a   sequence of configurations $\vec{A}_\nu\in \eB_{n+1}(\vec{B})$, $\nu\geq 1$, such that
\[
\lim_{\nu\ra \infty}\lambda_{n+1}(\vec{A}_\nu)=0.
\]
In view of the assumption $\lambda(\vec{B})>0$ this sequence  contradicts the inequality (\ref{eq: ineq}).

Denote by $\eB_{n+1}(\vec{A},\vec{B})$ the set of configurations in $\eB_{n+1}(\vec{B})$ that can be obtained from $\vec{A}$ by a sequence of $\vec{B}$-moves. We  will  produce a real number $\kappa\in (0,1)$ and a map
\[
\eT:\eB_{n+1}(\vec{A},\vec{B})\ra \eB_{n+1}(\vec{A},\vec{B})
\]
such that
\[
\lambda\bigl(\, \eT(\vec{X})\,\bigr)\leq \kappa \lambda(\vec{X}),\;\;\forall \vec{X}\in\eB_{n+1}(\vec{A},\vec{B}).
\]
The sequence
\[
\vec{A}_\nu:=\eT^\nu(\vec{A})
\]
will then produce the sought for contradiction.

We begin by constructing maps
\[
\eM_1,\eM_2,\dotsc, \eM_n:\eB_{n+1}(\mu)\ra \eB_{n+1}(\mu)
\]
so that for any $k=1,\dotsc, n$ and any $\vec{X}\in\eB_{n+1}(\mu)$ we have
\[
\eM_k(\vec{X})=\Bigl(\, X_1,\dotsc, x_{k-1}, \frac{1}{2}(X_{k-1}+X_{k+1}), X_{k+1},\dotsc, X_{n+1}\,\Bigr),
\]
where for uniformity we set $X_0=\mu$. In other words, $\eM_k(\vec{X})$ is obtained from $\vec{X}$ by sliding the $k$-th bead of $\vec{X}$ to the midpoint of the interval $(X_{k-1}, X_k)$. In the new configuration the beads $(k-1)$, $k$ and  $(k+1)$ are equidistant.

Now define
\[
\eT:\eB_{n+1}(\mu)\ra \eB_{n+1}(\mu),\;\;\eT=\eM_1\circ \eM_2\circ\cdots \circ \eM_n.
\]
Note that
\[
\eM_n(X_1,\dotsc, X_{n+1})=\Bigl(X_1,\dotsc, X_{n-1}, \frac{1}{2}(X_{n-1}+X_{n+1}), X_{n+1}\,\Bigr).
\]
The configuration $\eM_{n-1}\circ \eM_n(\vec{X})$ differs from $\eM_n(X)$ only at the $(n-1)$-th component which is
\[
 \frac{1}{2}X_{n-2}+\frac{1}{4}X_{n-1}+\frac{1}{4}X_{n+1}.
\]
The $(n-k)$-th component of $\eM_{n-k}\circ\cdots \circ \eM_n(\vec{X})$ is
\[
\frac{1}{2}X_{n-k-1}+\frac{1}{4}X_{n-k}+\cdots +\frac{1}{2^{k+1}}X_{n-1}+\frac{1}{2^{k+1}}X_{n+1}.
\]
The first component of $\vec{Y}:=\eT(\vec{X})$ is
\[
Y_1=\frac{1}{2}X_0+\frac{1}{4}X_1+\cdots +\frac{1}{2^n}X_{n-1}+\frac{1}{2^n}X_{n+1}.
\]
If we set
\[
x_1=X_1-X_0=X_1-\mu, \;\;x_2=X_2-X_1,\dotsc, x_{n+1}=X_{n+1}-X_n
\]
we deduce
\[
Y_1 =\frac{1}{2^n}X_{n+1}+\sum_{k=0}^{n-1}\frac{1}{2^{k+1}}X_k=\frac{1}{2^n}X_{n+1}+\sum_{k=0}^{n-1}\frac{1}{2^{k+1}}\Bigl(\mu+\sum_{i=1}^k x_i\Bigr)
\]
\[
=\frac{1}{2^{n}}\Bigl(\mu+\sum_{i=1}^{n+1}x_i\Bigr) +(1-\frac{1}{2^n})\mu+ \sum_{k=1}^{n-1}\frac{1}{2^{k+1}}\sum_{i=1}^k x_i
\]
\[
=\mu+\frac{1}{2^n}\sum_{i=1}^{n+1}x_i+ \Bigl(\sum_{k=1}^{n-1}\frac{1}{2^{k+1}}\Bigr)x_1+\Bigl(\sum_{k=2}^{n-1}\frac{1}{2^{k+1}}\Bigr)x_2+\cdots +\frac{1}{2^n}x_{n-1}
\]
\[
= \mu+\frac{1}{2}x_1+\frac{1}{4}x_2+\cdots +\frac{1}{2^{n-1}}x_{n-1}+\frac{1}{2^n}x_n+\frac{1}{2^n}x_{n+1}.
\]

Observe that
\[
\lambda_{n+1}(X)= x_{n+1}-x_1,\;\;\lambda_{n+1}(Y)=y_{n+1}-y_1=Y_{n+1}-Y_n-Y_1+Y_0.
\]
We have
\[
\lambda_{n+1}(Y)= X_{n+1}- \frac{1}{2}(X_{n+1}+X_{n-1})-Y_1+\mu
\]
\[
=\sum_{i=1}^{n+1}x_i-\frac{1}{2}\Bigl(\sum_{i=1}^{n+1}x_i+\sum_{i=1}^{n-1}x_i\Bigr)-\Bigl(\frac{1}{2^n}x_{n+1}+\sum_{k=1}^n\frac{1}{2^k}x_k\Bigr)
\]
\[
=\frac{1}{2}(x_{n+1}+x_n)-\Bigl(\frac{1}{2^n}x_{n+1}+\sum_{k=1}^n\frac{1}{2^k}x_k\Bigr)
\]
\[
\leq \bigl(\,1-\frac{1}{2^n}\,\bigr)x_{n+1}-\sum_{k=1}^n\frac{1}{2^k}x_k=\sum_{k=1}^n\frac{1}{2^k}(x_{n+1}-x_k)
\]
\[
\leq \Bigl(\,\sum_{k=1}^n \frac{1}{2^k}\,\Bigr)(x_{n+1}-x_1)=\bigl(\,1-\frac{1}{2^n}\,\bigr)\lambda_{n+1}(X).
\]
Hence
\begin{equation}
\lambda_{n+1}\bigl(\,\eT(\vec{X})\,\bigr)\leq (1-2^{-n})\lambda_{n+1}(\vec{X}),\;\;\forall \vec{X}\in\eB_{n+1}(\mu).
\label{eq: contr}
\end{equation}
To conclude the proof  it suffices to show that
\begin{equation}
\eM_k(\vec{X})\in\eB(\vec{B}),\;\;\forall \vec{X}\in \eB(\vec{A},\vec{B}),\;\;k=1,\dotsc, n.
\label{eq: non-cross}
\end{equation}
Let $\vec{X}=(X_1,\dotsc, X_{n+1})\in \eB(\vec{A},\vec{B})$ and set  $\vec{Y}=\eM_k(\vec{X})$. Then
\[
Y_i= \begin{cases}
X_i, &i\neq k\\
\frac{1}{2}(X_{k-1}+X_{k+1}), &i=k.
\end{cases}
\]
To prove that $\vec{Y}\in \eB(\vec{A},\vec{B})$ we have to prove that  $Y_k\leq B_k$.    If this were not the case, then $Y_k>B_k$. Since $X_k<B_k$, we deduce $(B_k-X_k)<(Y_k-X_k)$.  This implies that  sliding the
 the $k$-th bead of $\vec{X}$ by  distance  $(B_k-X_k)$ is an admissible slide, and it is obviously a $\vec{B}$-move since the resulting configuration $\vec{X'}$  is in $\eB_{n+1}(\vec{B})$.  Clearly, the configuration $\vec{X'}$  crosses $\vec{B}$ since $X'_k=B_k$.  This contradicts the assumption (\ref{tag: a}) and finishes the proof of Lemma \ref{lemma: cross}  and of the implication $\Rightarrow$ in (\ref{eq: order}).

 \end{proof}
 
  To prove the converse implication $\Leftarrow$  we argue by induction.   The cases $n=1,2$ are trivial, while the case $n=3$ follows from Remark \ref{rem: equi}. 
  
  For the inductive step  suppose  $\vec{A}\prec \vec{B}$ in $\eB_{n+1}(\mu)$, $\forall \vec{A}<\vec{B}$.  Then
  \[
  (B_1,\dotsc, A_n)\prec(B_1,\dotsc, B_n)\in\eB_n(\mu),\;\;\forall (A_1,\dotsc, A_n)<(B_1,\dotsc, B_n),
  \]
  and the inductive assumption implies
  \[
  b_k>b_{k-2},\;\;\forall 2\leq k\leq n.
  \]
  To prove that $b_{n+1}>b_{n-1}$ we  argue by contradiction. Suppose $b_{n+1}=b_{n-1}$ so that
  \[
  b_{n+1}=b_n=b_{n-1}. 
  \]
The  condition $b_n>b_{n-2}$ implies that  $b_{n-2}<b_{n-1}$. Consider the bead distribution $\vec{C}\in\eB_{n+1}(\mu)$ described by
\[
C_k=B_k,\;\;\forall k\leq n-2, 
\]
\[
C_{n-1}=C_{n-2}+b_{n-2}=B_{n-2}+b_{n-2}< B_{n-1},
\]
\[
C_n=C_{n-1}+b_{n-1}<B_n,\;\;C_{n+1}=C_{n}+b_n<B_n.
\]
Then $\vec{C}<\vec{B}$, yet arguing as in Remark \ref{rem: equi} we see that  $\vec{C}\not\prec\vec{B}$. This contradiction completes the proof of Theorem \ref{th: order}.

 \end{proof} 
 
 The partial order $\preceq$   on $\eB_n(\mu)$  is a binary relation and thus can be identified with a subset of  $\eB_n(\mu)\times \eB_n(\mu)$.     We denote  by $\preceq_t$ its (topological) closure in $\eB_n(\mu)\times \eB_n(\mu)$.  
 
 \begin{corollary} The binary relation $\preceq_t$ is a partial order relation. More precisely 
  \[
 \vec{A}\preceq_t\vec{B} \Llra \vec{A}\leq \vec{B}.
 \]
 \label{cor: order}
 \end{corollary}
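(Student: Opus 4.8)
The plan is to pin down the closure $\preceq_t$ exactly, by showing $\vec A\preceq_t\vec B\Llra\vec A\leq\vec B$. Once this identification is made, the assertion that $\preceq_t$ is a partial order is automatic: $\leq$ is the restriction to $\eB_n(\mu)$ of the coordinatewise order on $\bR^n$, which is reflexive, antisymmetric and transitive. So the whole corollary reduces to proving the displayed equivalence.

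One inclusion is essentially free. It was already observed that $\vec A\preceq\vec B\Rightarrow\vec A\leq\vec B$, so the relation $\preceq$ is contained in the set $\{(\vec A,\vec B):\vec A\leq\vec B\}$. The latter is cut out of $\eB_n(\mu)\times\eB_n(\mu)$ by the closed conditions $A_k\leq B_k$, $k=1,\dotsc,n$, hence is closed; being closed and containing $\preceq$, it contains the closure $\preceq_t$ as well. This gives $\preceq_t\subseteq\{\leq\}$ with no further work.

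The substance is the reverse inclusion $\{\leq\}\subseteq\preceq_t$: given $\vec A\leq\vec B$ I must produce a sequence of pairs lying in $\preceq$ that converges to $(\vec A,\vec B)$. The idea is to keep $\vec A$ fixed and perturb only $\vec B$ into a configuration $\vec B_\ve$ that dominates $\vec A$, satisfies the \emph{strict} four-bead hypothesis of Theorem \ref{th: order}, and tends to $\vec B$ as $\ve\downarrow 0$. Concretely I would prescribe $\vec B_\ve$ through its difference vector by $b^\ve_i:=b_i+\ve\,i$, equivalently $B^\ve_k:=B_k+\ve\,\tfrac{k(k+1)}2$. Then the successive differences satisfy $b^\ve_{i+1}-b^\ve_i=(b_{i+1}-b_i)+\ve\geq\ve>0$, so $\vec B_\ve$ is monotone and lies in $\eB_n(\mu)$; moreover $b^\ve_k-b^\ve_{k-2}=(b_k-b_{k-2})+2\ve\geq 2\ve>0$ for every $k\geq 3$, so $\vec B_\ve$ meets the hypothesis of Theorem \ref{th: order}; and finally $A_k\leq B_k<B^\ve_k$, so $\vec A<\vec B_\ve$ while $B^\ve_k\to B_k$. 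Applying the implication $\Rightarrow$ of Theorem \ref{th: order} to $\vec B_\ve$ yields $\vec A\preceq\vec B_\ve$, i.e.\ $(\vec A,\vec B_\ve)\in\preceq$ for every $\ve>0$; letting $\ve\downarrow 0$ gives $(\vec A,\vec B_\ve)\to(\vec A,\vec B)$, whence $(\vec A,\vec B)\in\preceq_t$. Combining the two inclusions gives $\preceq_t=\{\leq\}$.

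I do not anticipate a genuine obstacle; the only delicate point is choosing a perturbation that simultaneously preserves monotonicity, creates the strict inequalities $b^\ve_k>b^\ve_{k-2}$, and still converges to $\vec B$ from above. The linear-in-index shift $b_i\mapsto b_i+\ve\,i$ accomplishes all three at once: its first differences are the positive constant $\ve$, which keeps the difference sequence nondecreasing, while it adds $2\ve$ to each gap $b_k-b_{k-2}$, separating every string of four consecutive beads from being equidistant. This is exactly the strict regime in which Theorem \ref{th: order} applies, and taking limits transfers slideability to the full order $\leq$.
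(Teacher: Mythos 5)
Your proof is correct and follows essentially the same route as the paper: both handle the easy direction by noting that $\{(\vec{A},\vec{B}):\vec{A}\leq\vec{B}\}$ is closed and contains $\preceq$, and both prove the converse by perturbing $\vec{B}$ upward into the strict regime $b_k>b_{k-2}$ of Theorem \ref{th: order} and letting the perturbation tend to $0$. The only difference is cosmetic: you add $\ve\,i$ to each gap $b_i$ (i.e., $B_k\mapsto B_k+\ve\,k(k+1)/2$) while the paper uses $B_k\mapsto B_k+2^k\ve$; either choice makes the difference sequence strictly increasing, which is all the argument needs.
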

 
 \begin{proof}  Clearly $\vec{A}\preceq_t\vec{B} \Longrightarrow \vec{A}\leq \vec{B}$. Conversely,  suppose $\vec{A}\leq\vec{B}$.  For every   $\ve>0$
 we  define
 \[
 \vec{B}(\ve)= \bigl(\,B_1(\ve), \dotsc , B_n(\ve)\,\bigr),
 \]
 where  $B_k(\ve)= 2^k\ve$. Then
 \[
 B_{k+1}(\ve)-B_k(\ve)= b_{k+1}+2^k\ve  > b_k+2^{k-1}\ve=B_{k}(\ve)-B_{k-1}(\ve).
 \]
  Theorem \ref{th: order} implies that $\vec{A}\prec \vec{B}(\ve)$. Letting $\ve\ra 0$  we deduce $\vec{A}\preceq_t\vec{B}$.
  
  \end{proof}

  The above  corollary can be used to produce  various interesting inequalities.
  
  For simplicity we set $\eB_n:=\eB_n(0)$. The  bead distributions in $\eB_n$ are described by      nondecreasing strings of  nonnegative numbers 
  \[
   \vec{a}=(a_1,\dotsc, a_n), \;\;0\leq a_1\leq \cdots \leq a_n
  \] To such a vector we associate the  monotone bead distribution
  \[
  \vec{A}=(A_1,\dotsc, A_n),\;\;A_k=a_1+\dotsc+a_k.
  \]
  The condition $\vec{A}\leq \vec{B}$ in $\eB_n$ can then be rewritten   as  
  \[
  a_1+\cdots+a_k\leq b_1+\cdots+b_k,\;\;\forall k=1,\dotsc, n.
  \]
 In this notation, and admissible bead slide is a transformation of the form
 \begin{equation}
 (a_1,\dotsc, a_k, a_{k+1},\dotsc, a_n)\longmapsto (a_1,\dotsc, a_k+\delta, a_{k+1}-\delta,\dotsc, a_n),\;\;2\delta \leq a_{k+1}-a_k.
 \label{eq: slide}
 \end{equation}
  Suppose  $f:[0,\infty)\ra [0,\infty)$ is a nondecreasing $C^1$ function. We then get   a map $\eT_f:\eB_n\ra \eB_n$,
  \[
 (a_1,a_1+a_2,\dotsc,a_1+\cdots + a_n)\mapsto \bigl(\, f(a_1), f(a_1)+f(a_2),\dotsc, f(a_1)+\cdots +f(a_n)\,\bigr).
  \]
 \begin{theorem} Suppose $f:[0,\infty)\ra [0,\infty)$ is  $C^1$ and nondecreasing. Then the  induced map $\eT_f:\eB_n\ra \eB_n$ preserves the order relation $\leq$ if and only if $f$ is concave, i.e., the derivative $f'$ is nonincreasing.
 \label{th: monotone}
 \end{theorem}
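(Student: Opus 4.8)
The plan is to exploit the description of the order $\le$ furnished by Corollary \ref{cor: order} together with the explicit form of an admissible slide in (\ref{eq: slide}). Since $f$ is nondecreasing and nonnegative, $\eT_f$ is a well-defined continuous self-map of $\eB_n$, and the relation $\le$ is closed (it is cut out by finitely many non-strict inequalities). Hence, to prove the \emph{if} part it suffices to show that a single admissible slide $\vec{a}\mapsto\vec{a}'$ is carried by $\eT_f$ into a pair with $\eT_f(\vec{a})\le\eT_f(\vec{a}')$: by transitivity of $\le$ this gives $\vec{a}\preceq\vec{b}\Rightarrow \eT_f(\vec{a})\le\eT_f(\vec{b})$, and then continuity of $\eT_f$ together with closedness of $\le$ upgrades the implication to the closure $\preceq_t$, which by Corollary \ref{cor: order} coincides with $\le$.

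First I would unwind a single slide. Writing $\vec{a}'=(a_1,\dots,a_k+\delta,a_{k+1}-\delta,\dots,a_n)$ with $2\delta\le a_{k+1}-a_k$, the partial sums of $\eT_f(\vec{a}')$ and $\eT_f(\vec{a})$ agree for indices $<k$, differ by $f(a_k+\delta)-f(a_k)\ge 0$ at index $k$, and for indices $\ge k+1$ differ by $\bigl(f(a_k+\delta)+f(a_{k+1}-\delta)\bigr)-\bigl(f(a_k)+f(a_{k+1})\bigr)$. Thus everything reduces to the two-point inequality
\[
f(a_k+\delta)+f(a_{k+1}-\delta)\ge f(a_k)+f(a_{k+1}),
\]
equivalently $\int_{a_k}^{a_k+\delta}f'\ge\int_{a_{k+1}-\delta}^{a_{k+1}}f'$, two integrals over intervals of equal length $\delta$. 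Since $a_k+\delta\le a_{k+1}-\delta$ and $f'$ is nonincreasing (concavity), the integrand on the left dominates that on the right after the length-preserving shift, and the inequality follows. (The case $k=n$ is simpler, only $a_n$ increasing.)

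For the converse I would argue by contraposition and localize a counterexample where $f$ fails to be concave, assuming $n\ge 2$ (for $n=1$ every nondecreasing $f$ preserves $\le$, so the equivalence is meant for $n\ge 2$). If $f$ is not concave there exist $0\le p<q$ with $f'(p)<f'(q)$, and by continuity a small $\delta>0$ with $p+\delta\le q$ and $\int_p^{p+\delta}f'<\int_q^{q+\delta}f'$. I would then set
\[
\vec{a}=(p,\,q+\delta,\,M,\dots,M),\qquad \vec{b}=(p+\delta,\,q,\,M,\dots,M)
\]
with $M\ge q+\delta$, so that both lie in $\eB_n$; their partial sums coincide from index $2$ onward while $a_1=p\le p+\delta=b_1$, whence $\vec{a}\le\vec{b}$. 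But the index-$2$ partial sum of $\eT_f(\vec{a})$ is $f(p)+f(q+\delta)>f(p+\delta)+f(q)$, the corresponding partial sum of $\eT_f(\vec{b})$; hence $\eT_f(\vec{a})\not\le\eT_f(\vec{b})$ and $\eT_f$ does not preserve $\le$.

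The load-bearing point is the reduction in the first paragraph: once Corollary \ref{cor: order} identifies $\le$ with the closure of the slide order, order-preservation collapses to a statement about one elementary slide, and the rest is the concavity inequality above (and its reversal for the converse). The only care required is the passage to the closure—using continuity of $\eT_f$ and closedness of $\le$—and noting the degenerate case $n=1$.
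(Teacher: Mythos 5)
Your proof is correct and follows essentially the same route as the paper: reduce order-preservation, via Corollary \ref{cor: order} and continuity, to a single admissible slide, and then note that the resulting two-point inequality $f(x+\delta)+f(y-\delta)\ge f(x)+f(y)$ is equivalent, by comparing $\int_x^{x+\delta}f'$ with $\int_{y-\delta}^{y}f'$, to $f'$ being nonincreasing. Your converse merely makes explicit, with a concrete pair of configurations, the ``only if'' half that the paper folds into its biconditional reduction, and your caveat about the degenerate case $n=1$ (where every nondecreasing $f$ preserves $\le$) is a fair observation the paper leaves implicit.
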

 
 \begin{proof} In view of  Corollary  \ref{cor: order} and   the  continuity  of $f$ we deduce that $\eT_f$  preserves the order $\leq$ if and only if  $\eT_f(\vec{A})\leq\eT_f(\vec{B})$ whenever $\vec{B}$ is obtained from $\vec{A}$ via a single admissible bead slide.  Using (\ref{eq: slide}) we see  that  this means that for any $0\leq x\leq y$, $0\leq \delta \leq \frac{1}{2}(y-x)$  we have 
 \[
 f(x+\delta)\geq f(x),\;\;f(x+\delta)+f(y-\delta)\geq f(x)+f(y).
 \]
 The  first inequality follows from the fact that $f$ is nodecreasing. The second inequality can be rephrased as
 \[
 \int_x^{x+\delta} f'(t) dt =f(x+\delta)-f(x)\geq f(y)-f(y-\delta)=\int_{y-\delta}^y f'(s) ds,
 \]
 for any $x,y,\delta\geq 0$ such that $x\leq x+\delta\leq y-\delta\leq y$.  This clearly happens if and only if $f'$ is nonincreasing.
  \end{proof}
  
  \begin{remark}  In the above result we can drop the $C^1$ assumption  on $f$, but the last  step in the proof requires a slightly   longer and less transparent argument.\qed
  \end{remark}

  \begin{corollary}  Suppose   $f:[\mu,\infty) \ra \bR$ is  $C^1$, nondecreasing and concave, and  $(y_i)_{1\leq i}$ is a nondecreasing sequence of  real numbers
  \[
  \mu\leq y_1\leq\cdots \leq y_n.
  \]
  Then for any  numbers  $x_1,\dotsc, x_n\in [\mu,\infty)$ such that
  \[
  x_1+\cdots +x_k\leq y_1+\cdots +y_k,\;\;\forall k=1,\dotsc, n
  \]
  we have
  \begin{equation}
  f(x_1)+\cdots +f(x_n)\leq f(y_1)+\cdots +f(y_n).
  \label{eq: ineq}
  \end{equation}
  \label{cor: ineq}
\end{corollary}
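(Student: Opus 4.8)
The plan is to reduce the claimed inequality to the order‑preservation property of the map $\eT_f$ established in Theorem \ref{th: monotone}. The obstacle is that the hypotheses of that theorem are tailored to \emph{monotone} configurations in $\eB_n=\eB_n(0)$ — that is, to nonnegative, nondecreasing increment vectors, with $f$ mapping $[0,\infty)$ into $[0,\infty)$ — whereas Corollary \ref{cor: ineq} allows the $x_i$ to be arbitrary elements of $[\mu,\infty)$ in arbitrary order, and $f$ is only assumed to take values in $\bR$. So the first part of the argument is a chain of normalizations that bring us into the exact setting of Theorem \ref{th: monotone}, and only then do we invoke it.

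First I would dispose of the ordering of the $x_i$. Let $x_{(1)}\le\cdots\le x_{(n)}$ be the nondecreasing rearrangement of $x_1,\dots,x_n$. The total $\sum_i f(x_i)=\sum_i f(x_{(i)})$ is unchanged, and for every $k$ the quantity $x_{(1)}+\cdots+x_{(k)}$ is the \emph{smallest} sum of $k$ of the numbers $x_i$, whence $x_{(1)}+\cdots+x_{(k)}\le x_1+\cdots+x_k\le y_1+\cdots+y_k$. Thus, after replacing $\vec{x}$ by its rearrangement, I may assume $\mu\le x_1\le\cdots\le x_n$ without weakening the hypothesis or altering the conclusion. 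This rearrangement is the only place where a genuine (if elementary) idea enters; everything afterwards is bookkeeping, so I regard it as the main point to get right.

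Next I would remove the parameter $\mu$ and the sign condition on $f$. Setting $\tilde{x}_i:=x_i-\mu\ge 0$ and $\tilde{y}_i:=y_i-\mu\ge 0$ keeps both sequences nondecreasing and subtracts $k\mu$ from each partial sum, so the hypothesis $\tilde{x}_1+\cdots+\tilde{x}_k\le \tilde{y}_1+\cdots+\tilde{y}_k$ persists for all $k$. Define $g:[0,\infty)\ra\bR$ by $g(t):=f(t+\mu)-f(\mu)$. Then $g$ is $C^1$, nondecreasing, concave, and nonnegative (the last because $f$ is nondecreasing, so $f(t+\mu)\ge f(\mu)$ for $t\ge 0$), and $g(\tilde{x}_i)=f(x_i)-f(\mu)$, $g(\tilde{y}_i)=f(y_i)-f(\mu)$. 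Hence the desired conclusion $\sum_i f(x_i)\le\sum_i f(y_i)$ is equivalent to $\sum_i g(\tilde{x}_i)\le\sum_i g(\tilde{y}_i)$, and $g$ now satisfies precisely the hypotheses under which $\eT_g:\eB_n\ra\eB_n$ was defined.

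Finally I would apply Theorem \ref{th: monotone}. Let $\vec{X},\vec{Y}\in\eB_n$ be the monotone distributions with increment vectors $(\tilde{x}_1,\dots,\tilde{x}_n)$ and $(\tilde{y}_1,\dots,\tilde{y}_n)$; these lie in $\eB_n$ exactly because the $\tilde{x}_i,\tilde{y}_i$ are nonnegative and nondecreasing, and $\eT_g(\vec{X}),\eT_g(\vec{Y})\in\eB_n$ because $g\ge 0$ is nondecreasing. By the description of the order $\le$ on $\eB_n$ through partial sums of increments, the hypothesis says exactly that $\vec{X}\le\vec{Y}$. Since $g$ is $C^1$, nondecreasing and concave, Theorem \ref{th: monotone} yields $\eT_g(\vec{X})\le\eT_g(\vec{Y})$. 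Reading off the $n$-th coordinate of this inequality — the coordinate recording the total $g(\tilde{x}_1)+\cdots+g(\tilde{x}_n)$ of the transformed increments — gives $\sum_{i=1}^n g(\tilde{x}_i)\le\sum_{i=1}^n g(\tilde{y}_i)$, which unwinds to the stated inequality. The only verifications requiring care are those already flagged: that the rearrangement preserves both $\sum_i f(x_i)$ and the partial‑sum hypothesis, and that the affine reductions in $\mu$ and in $f$ preserve concavity and monotonicity; with these in hand the appeal to Theorem \ref{th: monotone} is immediate.
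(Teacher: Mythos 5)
Your proof is correct and follows essentially the same route as the paper's: rearrange the $x_i$ into nondecreasing order (noting partial sums only decrease), translate by $\mu$ and normalize $f$ to $g(t)=f(t+\mu)-f(\mu)$ so that $g:[0,\infty)\ra[0,\infty)$ is $C^1$, nondecreasing and concave, then apply Theorem \ref{th: monotone} to the configurations in $\eB_n$ with increments $\tilde{x}_i,\tilde{y}_i$ and read off the last coordinate. Your explicit verifications (minimality of the rearranged partial sums, nonnegativity of $g$, well-definedness of $\eT_g$) are details the paper leaves implicit, but the argument is the same.
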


\begin{proof} Denote by $(x_k')$ the increasing rearrangement of the numbers $x_1,\dotsc, x_n$. Then
\[
x_1'+\cdots +x_k' \leq   x_1+\cdots +x_k\leq y_1+\cdots +y_k,\forall k=1,\dotsc, n,
\]
\[
f(x_1')+\cdots +f(x_n')=f(x_1)+\cdots +f(x_n),
\]
so  it suffices to prove (\ref{eq: ineq}) in the special case when the sequence $(x_k)$ is nondecreasing.   Define
\[
a_k:= x_k-\mu,\;\;b_k:=y_k-\mu, \;\;1\leq k\leq n,
\]
\[
A_k=a_1+\cdots +a_k,\;\;B_k=b_1+\cdots +b_k,\;\;1\leq k\leq n,
\]
\[
g:[0,\infty)\ra [0,\infty),\;\;g(t)=f(t+\mu)-f(\mu).
\]
Then  $(A_1,\dotsc, A_n)\leq (B_1,\dotsc, B_n)\in\eB_n$, and the function $g$ is $C^1$, nondecreasing and concave.   It follows that the   induced map $\eT_g:\eB_n\ra\eB_n$ is    order preserving. In particular, we conclude that
\[
g(a_1)+\cdots +g(a_n)\leq g(b_1)+\cdots +g(b_n).
\]
This clearly implies (\ref{eq: ineq}).

\end{proof}

\begin{corollary} Suppose  $f:\bR\ra \bR$ is a $C^1$, concave function and  $ y_1\leq \cdots \leq y_n$. Then for any sequence $x_1,\dotsc, x_n$ such that
\[
x_1+\cdots + x_k\leq y_1+\cdots + y_k,\;\;\forall k=1,\dotsc, n-1,
\]
and
\begin{equation}
x_1+\cdots +x_n=y_1+\cdots +y_n
\label{eq: schur}
\end{equation}
we have
\begin{equation}
f(x_1)+\cdots +f(x_n)\leq	 f(y_1)+\cdots +f(y_n).
\label{eq: schur1}
\end{equation}
\label{cor: schur}
\end{corollary}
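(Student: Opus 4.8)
The plan is to deduce this from Corollary \ref{cor: ineq}. Two features distinguish the present statement from that corollary: here $f$ is only assumed concave, not nondecreasing, and the control of the partial sums is relaxed to an inequality for $k\leq n-1$ together with the equality (\ref{eq: schur}) for $k=n$. The second point is harmless, since (\ref{eq: schur}) is precisely the borderline case of the inequality $x_1+\cdots+x_k\leq y_1+\cdots+y_k$ at $k=n$, so all $n$ partial-sum inequalities required by Corollary \ref{cor: ineq} are in fact available. As in the proof of that corollary, I would first pass to the increasing rearrangement $(x_k')$ of $(x_k)$: it satisfies $x_1'+\cdots+x_k'\leq x_1+\cdots+x_k$ for every $k$, leaves the total sum unchanged, and does not affect $\sum_i f(x_i)$. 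Hence I may assume from the start that $x_1\leq\cdots\leq x_n$.

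The only genuine issue is the missing monotonicity of $f$. Since $f$ is concave, $f'$ is nonincreasing, and the idea is to tilt $f$ by a linear function to restore monotonicity while using (\ref{eq: schur}) to keep the inequality unchanged. Concretely, set $\mu:=\min(x_1,y_1)$ and $M:=\max(x_n,y_n)$, so that all the points $x_i,y_i$ lie in $[\mu,M]$, and put $c:=-f'(M)$. Because $f'$ is nonincreasing we have $f'(t)+c\geq 0$ for all $t\in[\mu,M]$, so $t\mapsto f(t)+ct$ is nondecreasing and concave on $[\mu,M]$. The subtle point is that Corollary \ref{cor: ineq} is stated for functions defined and nondecreasing on the entire half-line $[\mu,\infty)$, whereas a global linear tilt need not make $f$ nondecreasing beyond $M$, since $f'$ may tend to $-\infty$. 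I would circumvent this by extending: let $\tilde f:[\mu,\infty)\ra\bR$ agree with $f(t)+ct$ on $[\mu,M]$ and be continued by the constant value $f(M)+cM$ for $t>M$. Since $f'(M)+c=0$, this extension is $C^1$, and it is nondecreasing and concave on all of $[\mu,\infty)$.

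Now I would apply Corollary \ref{cor: ineq} to $\tilde f$, the nondecreasing sequence $y_1\leq\cdots\leq y_n$ with $y_1\geq\mu$, and the points $x_1,\dots,x_n\in[\mu,\infty)$, whose partial sums satisfy $x_1+\cdots+x_k\leq y_1+\cdots+y_k$ for every $k=1,\dots,n$. This yields $\sum_i \tilde f(x_i)\leq\sum_i\tilde f(y_i)$. Since every $x_i$ and $y_i$ lies in $[\mu,M]$, there $\tilde f$ coincides with $f(t)+ct$, so the inequality reads
\[
\sum_{i=1}^n f(x_i)+c\sum_{i=1}^n x_i\leq \sum_{i=1}^n f(y_i)+c\sum_{i=1}^n y_i.
\]
Finally I would invoke the equality (\ref{eq: schur}), $\sum_i x_i=\sum_i y_i$, to cancel the two linear terms, arriving at (\ref{eq: schur1}). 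The main obstacle is thus structural rather than computational: it is arranging that the concave function, after the monotonizing tilt, fits the global hypotheses of Corollary \ref{cor: ineq}, which is exactly what the constant extension past $M$ accomplishes. The equality constraint (\ref{eq: schur}) is precisely what renders the linear tilt cost-free, and is the extra ingredient this majorization-type statement requires beyond Corollary \ref{cor: ineq}.
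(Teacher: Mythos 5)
Your proposal is correct and is essentially the paper's own argument: the paper likewise tilts $f$ by $-f'(L)t$ for $L>\max\{x_i,y_j\}$, flattens it to a constant past $L$ to obtain a $C^1$, nondecreasing, concave function, applies Corollary \ref{cor: ineq}, and cancels the linear terms via (\ref{eq: schur}). Your only departures are cosmetic --- taking the tilt point at the maximum itself rather than beyond it, making the restriction to a half-line $[\mu,\infty)$ explicit, and an initial rearrangement step that is actually redundant since Corollary \ref{cor: ineq} does not require the $x_i$ to be sorted.
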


\begin{proof} Choose $L>\max\{x_i,y_j;\;\;1\leq i,j\leq n\}$ and   define
\[
g:\bR\ra \bR,\;\;g(t)=\begin{cases} 
f(t)-f'(L)t, &  t\leq L\\
f(L)-f'(L)L, & t>L.
\end{cases}
\]
Then  $g$ is $C^1$, nondecreasing and concave and Corollary \ref{cor: ineq} implies that
\[
f(x_1)+\cdots +f(x_n)-f'(L)\sum_{k=1}^n x_k\leq f(y_1)+\cdots +f(y_n)-f'(L)\sum_{k=1}^ny_k.
\]
The inequality (\ref{eq: schur1}) now follows by invoking the equality (\ref{eq: schur}).

\end{proof}

   Corollary \ref{cor: schur}  implies  the    Schur majorization inequalities \cite[2.19-20]{HLP}, \cite[Chap. 13]{Ste}. More precisely, we have the following result.
   
   \begin{corollary}[Schur majorization] Suppose $b_1\geq \cdots \geq b_n$ is a nonincreasing  sequence of real numbers and $g:\bR\ra \bR$ is a  $C^1$, convex function, i.e. , $g'$ is nondecreasing. Then for any sequence $a_1,\dotsc, a_n$ satisfying
   \[
   a_1+\cdots +a_k\geq b_1+\cdots +b_k,\;\;k=1,\dotsc, n-1,
   \]
   and
   \[
   a_1+\cdots +a_n=b_1+\cdots +b_n
   \]
   we have
   \[
   g(a_1)+\cdots +g(a_n)\geq g(b_1)+\cdots +g(b_n).
   \]
   \end{corollary}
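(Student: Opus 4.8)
The plan is to deduce this directly from Corollary \ref{cor: schur} by applying the reflection $t \mapsto -t$, which interchanges convexity with concavity and reverses all the order and partial-sum relations. First I would set $f(t) := -g(-t)$. Since $g$ is $C^1$, so is $f$, and $f'(t) = g'(-t)$; because $g$ is convex, $g'$ is nondecreasing, hence $t \mapsto g'(-t)$ is nonincreasing, so $f'$ is nonincreasing and $f$ is concave. Thus $f$ satisfies the hypotheses of Corollary \ref{cor: schur}.

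Next I would introduce the reflected sequences $x_k := -a_k$ and $y_k := -b_k$ for $1 \leq k \leq n$. Since $b_1 \geq \cdots \geq b_n$, we have $y_1 \leq \cdots \leq y_n$, as required. Negating the hypothesis $a_1 + \cdots + a_k \geq b_1 + \cdots + b_k$ yields $x_1 + \cdots + x_k \leq y_1 + \cdots + y_k$ for $k = 1, \dotsc, n-1$, and negating the total-sum equality gives $x_1 + \cdots + x_n = y_1 + \cdots + y_n$. Hence the full hypotheses of Corollary \ref{cor: schur} hold for $f$, $(x_k)$ and $(y_k)$.

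Applying that corollary gives $\sum_{k=1}^n f(x_k) \leq \sum_{k=1}^n f(y_k)$. Finally I would translate back using $f(x_k) = -g(-x_k) = -g(a_k)$ and $f(y_k) = -g(b_k)$, so the inequality becomes $-\sum_{k=1}^n g(a_k) \leq -\sum_{k=1}^n g(b_k)$, which is exactly the desired conclusion $\sum_{k=1}^n g(a_k) \geq \sum_{k=1}^n g(b_k)$.

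There is no serious obstacle here; the entire content is the sign-reversal bookkeeping. The only point that needs care is to confirm that negation simultaneously (i) turns the nonincreasing sequence $(b_k)$ into a nondecreasing one, (ii) reverses each partial-sum inequality in the correct direction, and (iii) leaves the total-sum equality intact, so that the reflected data land precisely within the scope of Corollary \ref{cor: schur}.
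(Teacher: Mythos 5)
Your proof is correct and is exactly the paper's argument: the paper likewise deduces the statement from Corollary \ref{cor: schur} via the substitutions $x_k=-a_k$, $y_k=-b_k$, $f(t)=-g(-t)$. You have merely spelled out the sign-reversal bookkeeping that the paper leaves implicit.
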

   
   \begin{proof} Use Corollary \ref{cor: schur} with the sequences $x_k=-a_k$, $y_j=-b_j$ and $f(t)=-g(-t)$.
   \end{proof}


\begin{thebibliography}{XXXXXX}

\bibitem{HLP}  G. Hardy, J.E. Littlewood, G. P\'{o}lya: {\sl Inequalities}, Cambridge University Press, 1954.

\bibitem{Ste} J.M. Steele: {\sl   The Cauchy-Schwarz Master Class. An Introduction to the Art of Mathematical Inequalities}, Cambridge University Press, 2004.
\end{thebibliography}
\end{document}